\newtheorem{theorem}{Theorem}[section]
\newtheorem{lemma}[theorem]{Lemma}
\theoremstyle{definition}
\newtheorem{definition}[theorem]{Definition}
\theoremstyle{remark}
\newtheorem{remark}[theorem]{Remark}
\newtheorem{example}[theorem]{Example}
\numberwithin{equation}{section}
\def\Ind{\setbox0=\hbox{$x$}\kern\wd0\hbox to 0pt{\hss$\mid$\hss} \lower.9\ht0\hbox to 0pt{\hss$\smile$\hss}\kern\wd0} 
\def\Notind{\setbox0=\hbox{$x$}\kern\wd0\hbox to 0pt{\mathchardef \nn=12854\hss$\nn$\kern1.4\wd0\hss}\hbox to 0pt{\hss$\mid$\hss}\lower.9\ht0 \hbox to 0pt{\hss$\smile$\hss}\kern\wd0}
\def \d {\delta}
\def \DD {\mathcal D}
\def \D {\Delta}
\def \U {\mathcal U}
\def \dcl {dcl}
\def \l {\langle}
\def \r {\rangle}
\def \G {G}
\def \s {\sigma}
\def \Aut {Aut}
\def \DCF {\operatorname{DCF}}
\title[Some definable Galois theory and  examples]{Some definable Galois theory and examples}
\author{Omar Le\'on S\'anchez}
\address{Omar Le\'on S\'anchez\\
McMaster University\\
Department of Mathematics and Statistics\\
1280 Main St W\\
Hamilton, ON, L8S 4L8.}
\email{oleonsan@math.mcmaster.ca}
\author{Anand Pillay}
\address{Anand Pillay\\
University of Notre Dame\\
Department of Mathematics\\
255 Hurley, Notre Dame\\
IN, 46556.}
\email{apillay@nd.edu}
\thanks{Anand Pillay was supported by NSF grant DMS-1360702}
\date{\today}
\subjclass[2010]{03C60, 12H05.}
\keywords{model theory, differential fields, strongly normal extensions}
\begin{document}

\begin{abstract}
We  make explicit certain results around the Galois correspondence in the context of definable automorphism groups, and  point out the relation to some  recent papers dealing with the Galois theory of algebraic differential equations when the constants are not ``closed" in suitable senses. We also improve the definitions and  results from \cite{DGT1} on generalized strongly normal extensions, using this to give a restatement of a conjecture from \cite{BP}. 
\end{abstract}

\maketitle

\section{Introduction}
In this partly expository paper, we  make explicit and elaborate on, results  which are essentially contained in the literature: 
 see \cite[Section B.4]{Hrushovski} by Hrushovski or  \cite[Section 2.2]{Kamensky} by Kamensky.   This is  parts (iii),  (iv), and (v)  of Theorem 2.2 below, a certain ``indirect" Galois correspondence in the context of definable automorphism groups in first order theories. 

Several papers and/or preprints have appeared recently in which a Galois correspondence is  given in various differential algebraic contexts with the common feature that the ``constants" are not assumed to be  algebraically, differentially, or in some other suitable sense, ``closed".  We simply want to clarify how
 these results are subsumed by the above references. In fact in the special case of strongly normal extensions of a differential field $K$ whose field of constants $C_{K}$ is not necessarily algebraically closed, the results are already in Kolchin's book \cite[Chapter VI, Theorem 3]{Kolchin}.  In all these cases some translation between different languages is required, and  the model-theoretic references may be unknown or rather  obscure, at least for differential algebraists.  

The reason we talk about indirect Galois correspondences is as follows: In usual Galois theory, we take for $K$ say a perfect field, $P(x) = 0$ a polynomial equation over $K$, and $L$ a splitting field for this equation.  We can consider the (finite) set $Y$ of solutions of the equation in the algebraic closure $K^{alg}$ of $K$. Then $L$ is generated over $K$ by $Y$, and moreover the (finite) group of permutations of $Y$ which preserve all polynomial relations over $K$ coincides with $Aut(L/K)$. The Galois correspondence is directly  between subgroups of $Aut(L/K)$ and fields in between $K$ and $L$.  This picture also holds for  the Picard-Vessiot theory when the constants are algebraically closed. Namely we take a linear differential equation $\d y = Ay$ over a differential field $(K,\d )$ of characteristic zero with algebraically closed field $C_{K}$ of constants. We now  let $Y$ be the solution set of the equation in the differential closure $K^{diff}$ of $K$,  which is an $n$-dimensional vector space over the field $C$ of constants of $K^{diff}$. We choose a basis $\bar y$ of $Y$ over  $C$, and let $L$ be the (differential)  field generated over $K$ by $\bar y$, the so-called  Picard-Vessiot extension of $K$ for the equation $\d y = Ay$.  It follows from $C_{K}$ being algebraically closed that actually $C = C_{K}$. Hence the full solution set $Y$ is contained in $L$, and the group of  permutations of $Y$ preserving all differential polynomial relations over $K$ and $C$, which has the the structure of an algebraic subgroup $G$ of $GL_{n}(C)$, coincides with $Aut_{\d}(L/K)$  The Galois correspondence is between algebraic subgroups of $G = Aut_{\d}(L/K)$ and differential fields in between $K$ and $L$, as in the polynomial case.  But let us consider now the differential situation when the field of constants $C_{K}$ of $K$ is {\em not necessarily} algebraically closed.  Then $C$, the algebraic closure of $C_{K}$, may properly contain $C_{K}$. The set $Y$ of solutions of the equation in $K^{diff}$ is still an $n$-dimensional vector space over $C$,  and the group $G$ of permutations of $Y$ which preserve differential polynomial relations over $K$ and $C$ is an algebraic subgroup of $GL_{n}(C)$ as before. It may happen that there is a basis $\bar y$ of $Y$ over $C$ such that the differential field $L = F(\bar y)$ has the same constants as $K$, in which case $L$ is called a Picard-Vessiot extension of $K$ for the equation.  The existence of such $L$ implies that $G$ {\em is} defined over $C_{K}$, and we have moreover that $G(C)$ coincides with $Aut_{\d}(L(C)/K(C))$ while $G(C_{K})$ coincides with $Aut_{\d}(L/K)$. There is in general {\em no} direct Galois correspondence between algebraic subgroups of $G(C_{K})$ and  differential fields in between $K$ and $L$. But there {\em is} rather a Galois correspondence between algebraic subgroups of $G(C)$, defined over $C_{K}$,  and differential fields in between $K$ and $L$: given such an intermediate differential field $F$, the elements of $G$ which fix $F$ pointwise form an algebraic subgroup of $G$ defined over $C_{K}$, etc.   This is an example of the {\em indirect} Galois correspondence,  which is given by the existence of a Picard-Vessiot extension, and which is a special case of the model theoretic result we will be presenting. 

The general model-theoretic context is roughly (working inside an ambient saturated structure $\bar M$): we are given a set $A$ of parameters, and $A$-definable sets $Y$ and $X$. Under suitable conditions $Aut(Y/A,X)$, the group of permutations of $Y$ which are elementary over $A\cup X$ will have the structure of an $A$-definable group $G$.  And under additional conditions we will study the ``extension"  $B/A$ where $B = dcl(A,b)$ for a suitable $b\in Y$, as well as $Aut(B/A)$, which are the general versions of a Picard-Vessiot extension  $L/K$, and its automorphism group $Aut(L/K)$.  And we will describe the relations between these objects as well as the (indirect) Galois correspondence.
This will be accomplished in  the next section,  where the basic set-up (or definition) is given by conditions (I) and (II) and the main result is Theorem 2.3.  
As remarked in \cite{Hrushovski} what is being discussed is when and how the theory of {\em definable} automorphism groups translates to a theory of extensions $B/A$ of definably closed sets and their automorphism groups.

In the final section we show how this set-up and Theorem  2.3 subsume the various differential algebraic examples.  One possibly novel  thing on the differential algebraic side will be a refinement of the generalized strongly normal theory from \cite{DGT1}, \cite{DGT4},  where we are able to  drop the assumption that $X(K) = X(K^{diff})$.  We will use this to  give a rather clearer statement of Conjecture 2.3  from \cite{BP},  which is related to Ax-Lindemann-Weierstrass questions for (families of) semiabelian varieties.  This is possibly the only part of the paper which requires some more background from the reader.

We should make it clear that  our aim is {\em not} to give yet another exposition of  definable automorphism groups and/or  the model theoretic approach to the Galois theory of differential equations,  of which there are many  (\cite[Section 8.3]{Poizat-book}, \cite{Pillay-LMS}, \cite[[Chapter 8, section 4]{Pillay-book}), but rather to explain concisely how a number of recent results in the literature follow from general and known model-theoretic considerations.

\section{Model theoretic context and results}

Throughout this section we assume familiarity with the basics of model theory (the material in \cite[Chapter 1]{MMP} covers most of what we use here). 
Let us fix a complete theory $T$ in the language $L$.  $\bar M$ denotes a saturated model of $T$, and A, B, ... small subsets of $\bar M$. 
For convenience we assume 
\begin{center}
($\dagger$) \quad $T$ is $\omega$-stable and $T = T^{eq}$
\end{center}

In place of the $T = T^{eq}$ assumption, the reader could just assume that $T$ is $1$-sorted with elimination of imaginaries, which will actually be the case in the differential algebraic contexts and  examples, where $T = \DCF_{0,m}$.  

We fix a definably closed set $A$, a complete type $q(x)$ over $A$, and an $A$-definable set $X$. We consider the following conditions on $q$ and $X$.
\begin{enumerate}
\item [(I)] For any realizations $b,b'$ of $q$, $b'\in dcl(b,X,A)$.
\item   [(II)]  For any tuple $c$ from $X$, $q(x)$ and $r(y) = tp(c/A)$ are weakly orthogonal, namely $q(x)\cup r(y)$ extends to a unique complete $xy$-type over  $A$. 
\end{enumerate}  

\vspace{5mm}
\noindent

We do not really wish to add new and  unnecessary terminology to the subject, but we mention some possibilities in the following remark. 
\begin{remark} \
\begin{enumerate}
\item Condition (I) says that $q$ is internal to $X$ and is already the type of a ``fundamental system of solutions".   We could use the expression ``$q$ is strongly internal to $X$" for (I).
\item Condition (II) says that for any two realizations $b_{1}, b_{2}$ of $q$ in ${\bar M}$, 
$$tp(b_{1}/A,X) = tp (b_{2}/A,X),$$ which has to be the unique nonforking extension of $q$ over $A\cup X$. 
\item   We might also want to express conditions (I) and (II) by ``$B$ is a Galois extension of $A$, relative to $X$"  where $B = dcl(A,b)$ and $b$ realizes $q$. 
\end{enumerate}

\end{remark} 

\begin{lemma}\label{isolated} \
\begin{enumerate}
\item [(i)] Let $b$ realize 
 $q$ and let $B = dcl(A,b)$. Then $q$ and $X$ satisfy condition $\operatorname{(II)}$ above if and only if
\begin{itemize}
\item [(II')]$dcl((A\cup X)\cap B) = A$.
\end{itemize}
\item [(ii)] Suppose $\operatorname{(I)}$ and $\operatorname{(II)}$ hold  (for $q,X$).  Then $q$ is isolated. 
\end{enumerate}

\end{lemma}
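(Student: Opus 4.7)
The plan is to prove both parts via $\omega$-stability, establishing (i) as an equivalence first, then extracting (ii) from the structure of types of maximal Morley rank over $AX$.

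\textbf{For part (i), direction (II) $\Rightarrow$ (II').} I would in fact show the sharper statement $(A \cup X) \cap B \subseteq A$, from which (II') follows because $A$ is definably closed. The only nontrivial case is $e \in X \cap B$ with $e \notin A$. Since $e \in B = \dcl(Ab)$, write $e = g(b)$ for some $A$-definable function $g$. Applying (II) to the singleton tuple $c := e$ from $X$, the type $tp(b,e/A)$ is determined by $q \cup tp(e/A)$, so for every $b' \models q$ we have $(b', e) \equiv_A (b, e)$ and hence $g(b') = e$. Thus $g$ is identically $e$ on $q(\bar M)$, and every $\sigma \in \Aut(\bar M/A)$ satisfies $\sigma(e) = g(\sigma(b)) = e$ since $\sigma(b) \models q$. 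So $e \in \dcl(A) = A$.

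\textbf{For part (i), direction (II') $\Rightarrow$ (II).} I would argue the contrapositive using $\omega$-stability. If (II) fails, then $b \nind_A c$ for some tuple $c$ from $X$, whence the canonical base of $tp(c/Ab)$ lies in $\dcl(Ab) = B$ but not in $\operatorname{acl}(A)$; moreover, a Morley sequence in $tp(c/Ab)$ consists of realizations of $tp(c/A)$, which all lie in $X^n$, so the canonical base is located in $\dcl(X) \cap B$. The plan is then to descend to an explicit element of $(A \cup X) \cap B$ lying outside $A$, contradicting (II'). This descent is the main technical obstacle: it is immediate when $X$ is itself definably closed (as in the differential algebraic applications where $X$ is a field of constants), but in full generality one has to work slightly harder to pass from the abstract canonical base to elements of $X$ itself.

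\textbf{For part (ii).} I would isolate $q$ via the unique extension $p := tp(b/AX)$. By (II), $p$ is the only extension of $q$ to $AX$, so $q(\bar M) = p(\bar M)$. Using $\omega$-stability, choose $\psi(x) \in p$ over $AX$ with $\operatorname{RM}(\psi) = \operatorname{RM}(p)$; then $\psi$ has finite Morley degree $d$ and contains exactly the complete types $p = p_1, \ldots, p_d \in S(AX)$ of maximal rank. For $i \geq 2$, the uniqueness clause of (II) forces $p_i$ not to extend $q$, so there is $\phi_i \in L(A)$ with $\phi_i \in q$ and $\phi_i \notin p_i$. Setting $\phi := \bigwedge_{i=2}^{d} \phi_i \in q \subseteq p$, the formula $\psi \wedge \phi$ then isolates $p$ in $S(AX)$. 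Hence $q(\bar M) = (\psi \wedge \phi)(\bar M)$ is a definable set over $AX$ which, being $\Aut(\bar M/A)$-invariant (because $q \in S(A)$), has canonical parameter in $\dcl(A) = A$ since $T = T^{eq}$; so $q(\bar M)$ is $A$-definable, and any $A$-formula defining it isolates $q$.
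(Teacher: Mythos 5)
Your argument for (II) $\Rightarrow$ (II') is correct and is a reasonable way of making the paper's ``clear'' explicit. The other two steps each contain a genuine gap.

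For (II') $\Rightarrow$ (II), the opening reduction fails: the failure of weak orthogonality does \emph{not} imply $b\nind_A c$ for some tuple $c$ from $X$. Weak orthogonality is strictly stronger than independence when the types involved are not stationary, and non-stationarity over $A$ is exactly the situation this lemma is designed for ($A$ is not assumed algebraically closed; in the Picard--Vessiot reading, (II) can fail because $L$ acquires new constants \emph{algebraic} over $C_K$, which produces no forking at all). So your contrapositive only treats the cases that do not matter here, and in addition you flag but do not close the descent from the abstract canonical base to elements of $X$. The paper avoids forking altogether: by $\omega$-stability $tp(b/A\cup X)$ is definable over some $c\in dcl(A,X)$ with $c\in dcl(A,b)=B$; (II') forces $c\in A$, so $tp(b/A\cup X)$ is definable over $A$; and any $A$-automorphism carrying $b$ to another realization $b'$ of $q$ fixes $X$ setwise and fixes the defining schema, so $tp(b'/A\cup X)=tp(b/A\cup X)$, which gives (II). You should argue via definability of types over $A$, not via forking.

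For (ii), the step ``$\psi\wedge\phi$ isolates $p$ in $S(AX)$'' is false: you have only arranged that $p$ is the unique type of \emph{maximal Morley rank} containing $\psi\wedge\phi$, but that formula still contains complete types of smaller rank (compare the generic type of an algebraically closed field, which is the unique maximal-rank type in $x=x$ and is certainly not isolated). The telltale sign is that your argument never invokes condition (I), whereas (ii) is simply false without it: take $X$ to be a single point of $A$ and $q$ any non-isolated type over $A$; then (II) holds vacuously. The paper's proof uses (I) essentially: by compactness there is an $A$-definable partial function $f$ such that every realization of $q$ has the form $f(b,c)$ with $c$ from $X$; choosing $b$ in a prime (hence atomic) model over $A\cup X$ makes $tp(b/A\cup X)$ isolated, hence $tp(b'/A\cup X)$ is isolated for every $b'\models q$; by (II) this isolated type is the unique extension of $q$ over $A\cup X$, and compactness together with the $A$-invariance of the set of realizations then yields that $q$ itself is isolated. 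Some version of this use of (I) is unavoidable.
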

\begin{proof} (i) (II) implies (II') is clear. For the converse: By $\omega$-stability there is $c\in dcl(A,X)$ such that $tp(b/A,X)$ is definable over $c$ and note that $c\in dcl(b,A)$.  So $c\in dcl((A\cup X)\cap B)$. By (II'), $c\in A$, which means that $tp(b/A,X)$ is definable over $A$.  Note that for any other realization $b'$ of $q$ there is an $A$-automorphism of $\bar M$ taking $b$ to $b'$ and fixing $X$ setwise, hence $tp(b'/A\cup X)$ is definable over $A$ by the same schema defining $tp(b/A\cup X)$, and hence $tp(b'/A\cup X) = tp(b/A\cup X)$, giving (II). 
\newline
(ii) By (I) and compactness there is a formula $\phi(x)\in q(x)$ and a partial $A$-definable function $f(-,-)$ such that for all $b,b'$ satisfying $\phi(x)$, there is a tuple $c$ of elements of $X$ such that $f(b,c) = b'$. Now, by $\omega$-stability, let $M_{0}$ be a prime model over $A\cup X$, and let $b$ realize $\phi(x)$ in $M_{0}$ and let $b'$ be any realization of $q$  in ${\bar M}$. By what we just said there is $c$ from $X$ such that $f(b,c) = b'$. As $tp(b/A,X)$ is isolated, so is $tp(b'/A,X)$.  But by (II), $q$ has a unique extension to a complete type over $A\cup X$, which must be $tp(b'/A, X)$. By compactness $q$ is isolated. 
\end{proof}

We are still fixing $A$, $q$ and $X$. 
Let $Q$ be the set of realizations of $q$ (in $\bar M$). Let $Aut(Q/X,A)$ denote the group of permutations of $Q$ which are elementary over $A\cup X$, namely preserve all relations (on arbitrary $X^{n}$) which are $A\cup X$-definable.  Equivalently (using stability), $Aut(Q/X,A)$ is the group of permutations of $X$ induced by automorphisms of $\bar M$ which fix pointwise $A\cup X$.  Let us fix a realization $b$ of $q$, and let $B = dcl(A,b)$. Let $Aut(B/A)$ denote the group of permutations of $B$ which are elementary over $A$ in the sense of ${\bar M}$.  With this notation we have:

\begin{theorem}\label{Galoistheory}
Suppose that $q$ and $X$ satisfy conditions $\operatorname{(I)}$ and $\operatorname{(II)}$.  Then
\begin{enumerate}
\item [(i)] $Aut(Q/A,X)$ acts regularly (in particular transitively) on $Q$. Indeed every $\sigma\in Aut(Q/A,X)$ is determined by $\sigma(b')$ for some/any $b'\in Q$ and all elements of $Q$ have the same type over $A\cup X$. 
 \item [(ii)] There is an $A$-definable group $G$ whose domain is an $A$-definable set contained in $\dcl(A,X)$, and a group isomorphism $\mu$ between $Aut(Q/A,X)$ and $G$ such that the induced action of $G$ on $Q$ is $B$-definable.
\item[(iii)] $\mu$ induces an isomorphism between $Aut(B/A)$ and $G(A)$, the group of elements of $G$ which are in $A$.
 \item [(iv)] There is a Galois correspondence between definably closed sets in between $A$ and $B$ and $A$-definable subgroups of $G$. More precisely, for a definably closed set $C$ with $A\subseteq C\subseteq B$, if we let 
 $$H_C=\{\s \in G: \s \text{ fixes } C \text{ pointwise}\},$$
 then $H_C$ is an $A$-definable subgroup of $G$, every $A$-definable subgroup of $G$ appears as some $H_C$, and $C\subseteq C'$ iff $H_{C'}\leq H_{C}$. In addition:
 \item [(v)] Let $C$ be a definably closed set with $A\subseteq C\subseteq B$. Then, then $tp(b/C)$  and $X$ also satisfy conditions $\operatorname{(I)}$ and $\operatorname{(II)}$, and $H_C$ is isomorphic to $\Aut(Q_C/C,X)$, where $Q_C$ is the set of realizations of $tp(b/C)$. Moreover, there is a finite tuple $e$ such that $C=\dcl(e,A)$, and the type $tp(e/A)$ satisfies $\operatorname{(I)}$ if and only if $H_C$ is a normal subgroup of $G$. In this case, $G/H_C$ is isomorphic to $\Aut(P/A,X)$ where $P$ is the set of realizations of $tp(e/A)$.
\end{enumerate}
\end{theorem}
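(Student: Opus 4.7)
The plan is to lean on Lemma 2.2: condition (II) is equivalent to (II'), which forces codes of automorphisms to land in $A$, and together with (I) it makes $q$ isolated, letting us definably code automorphisms of $Q$ by tuples from $X$. For (i), weak orthogonality (II) gives that any two realizations of $q$ have the same type over $A \cup X$, so some $A \cup X$-automorphism of $\bar M$ carries one to the other, yielding transitivity; freeness follows from (I), since if $\sigma \in \Aut(Q/A,X)$ fixes some $b' \in Q$ then it fixes $\dcl(A, X, b') \supseteq Q$ pointwise. For (ii), the main technical step, I combine (I) with compactness (as in Lemma 2.2(ii)) to obtain a formula $\phi(x) \in q$ and an $A$-definable partial function $f$ such that for any $b_1, b_2 \models \phi$ some tuple $c$ from $X$ satisfies $f(b_1, c) = b_2$. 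Fix $b \models q$; to each $\sigma \in \Aut(Q/A,X)$ I associate, via the $A$-definable equivalence $(b_1, b_2) \sim (b_1', b_2')$ iff $b_1 b_2 \equiv_{A \cup X} b_1' b_2'$, the canonical parameter $g_\sigma$ of the class of $(b, \sigma(b))$. Since this class is $\Aut(\bar M / A \cup X)$-invariant by construction, $g_\sigma \in \dcl(A, X)$; the resulting set $G$ is $A$-definable, composition endows it with a group law, and by the classical theorem on groups in $\omega$-stable theories the operation and induced action on $Q$ are definable, with the action $B$-definable because it is given by $f(b, \cdot)$.

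For (iii), given $\sigma \in \Aut(B/A)$ let $g \in G$ be the unique element with $g \cdot b = \sigma(b)$ (unique by (i)). Then $g$ is definable from $(b, \sigma(b)) \in B \times B$ so $g \in B$, and combined with $g \in \dcl(A, X)$ this yields $g \in B \cap \dcl(A, X) \subseteq \dcl((A \cup X) \cap B) = A$ by (II'). Conversely, for $g \in G(A)$, $g \cdot b \in Q \cap \dcl(A, b) = Q \cap B$, and $b \mapsto g \cdot b$ extends to an element of $\Aut(B/A)$; the two assignments are mutually inverse homomorphisms. Part (iv) then follows by stability: since $B = \dcl(A,b)$ and the $G$-action on $Q$ is $B$-definable, each element of $B$ has the form $h(b)$ for an $A$-definable partial function $h$, so $H_C$ is cut out of $G$ by the $A$-definable conditions $h(g \cdot b) = h(b)$ as $h(b)$ ranges over $C$; every $A$-definable subgroup arises as some $H_C$ via (iii) applied at the intermediate level described in (v), and order-reversal is built into the construction.

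For (v), $tp(b/C)$ and $X$ inherit (I) from $q$ since $C \supseteq A$, and (II) for $tp(b/C)$ follows because the realizations of $tp(b/C)$ are exactly the $H_C$-translates of $b$ (by regularity together with (iii) at level $C$), identifying $H_C \cong \Aut(Q_C/C,X)$. Writing $C = \dcl(e, A)$ for a finite tuple $e$ (possible by $\omega$-stability and standard coding), $H_C$ is the stabilizer of $e$ in the induced $G$-action on the set $P$ of realizations of $tp(e/A)$, and it is normal iff the orbit map $g \mapsto g \cdot e$ factors through $G/H_C$ via an $A$-definable bijection---precisely the internality condition (I) for $tp(e/A)$ and $X$ (observing $P \subseteq \dcl(A, X, b)$ through the action). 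In that case $G/H_C \cong \Aut(P/A, X)$ by applying (i)--(iii) to the setup $(A, tp(e/A), X)$. The main obstacle is part (ii): exhibiting $G$ as an $A$-definable group inside $\dcl(A, X)$ requires carefully descending canonical parameters using weak orthogonality and appealing to the Hrushovski-type result that a type-definable group of elementary permutations in a stable theory is in fact definable; once (ii) is settled, the other parts reduce to bookkeeping with (II') and Lemma 2.2.
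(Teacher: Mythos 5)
Your overall strategy is the paper's: code each $\sigma\in\Aut(Q/A,X)$ by an element of $\dcl(A,X)$ using internality, use (II') to identify $\Aut(B/A)$ with $G(A)$, realize intermediate definably closed sets via stabilizers, and tie normality of $H_C$ to condition (I) for $tp(e/A)$. But two steps need repair. In (ii), the entire content is the $A$-definability of your equivalence relation $(b_1,b_2)\sim(b_1',b_2')$ iff $b_1b_2\equiv_{A\cup X}b_1'b_2'$ (equivalently, of the set of codes $g_\sigma$): ``having the same type over $A\cup X$'' is not a definable condition in general, and this is precisely where (II) must enter. The proof of Lemma \ref{isolated}(i) shows $tp(b/A\cup X)$ is definable \emph{over $A$}, and Lemma \ref{isolated}(ii) shows $q$ is isolated; these two facts are what make the relation $E(c_1,c_2)\Leftrightarrow f(b,c_1)=f(b,c_2)$ on tuples from $X$ (hence your $\sim$ restricted to $Q\times Q$) $A$-definable, and they directly hand you a group law and an action definable over $A$ and $B$ respectively. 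Deferring this to ``weak orthogonality plus a Hrushovski-type definability theorem'' is not adequate: a general ``type-definable groups are definable'' statement does not by itself place the domain of $G$ inside $\dcl(A,X)$ nor make it $A$-definable, and the direct computation \emph{is} the proof. Relatedly, in (iii) the inclusion $B\cap\dcl(A,X)\subseteq\dcl((A\cup X)\cap B)$ is not justified and is not an instance of (II'); what is true (and what you need) is $\dcl(A,X)\cap B=A$, which follows from weak orthogonality: if $g(b)=h(c)$ with $g,h$ $A$-definable and $c$ a tuple from $X$, then the formula $g(x)=h(y)$ lies in the unique completion of $q(x)\cup tp(c/A)$, so $g(b')=h(c)$ for every $b'\models q$ and $g(b)$ is fixed by $\Aut(\bar M/A)$.

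Your criterion for normality in (v) is wrong as stated: the orbit map $g\mapsto g\cdot e$ \emph{always} induces a bijection from the left coset space $G/H_C$ onto $P$, normal or not, so ``factors through $G/H_C$ via an $A$-definable bijection'' characterizes nothing. The correct equivalence, which your parenthetical gestures at, is: $H_C$ is normal iff every conjugate $gH_Cg^{-1}$ (which is the stabilizer of $g\cdot e$) contains $H_C$, iff $H_C$ fixes $P$ pointwise, iff $P$ is contained in the set of elements of $\dcl(A,b,X)$ fixed by $\Aut(\bar M/C\cup X)$, namely $\dcl(A,e,X)$ --- and that containment is exactly condition (I) for $tp(e/A)$. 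With these two points repaired, the rest of the proposal matches the paper's argument.
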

\begin{proof}
Everything is explicit or implicit in the literature, but we give a quick proof of the theorem in its entirety. (i) and (ii) set up the objects, and the main points are (iii), (iv), and (v). 

The proof of (i) is contained in the statement.

\vspace{2mm}
\noindent
(ii) In Lemma \ref{isolated} we saw that 
$$\text{(*)}\quad Q \text{ is an $A$-definable set,}  $$
and moreover by (II) (and the proof of Lemma 2.1 (i))
\begin{center}
(**) \quad $q$ has a unique extension to a complete type over  $A\cup X$ \\
which is moreover definable over $A$.
\end{center}
By (I) and compactness, there is an $A$-definable (partial) function $f_0$ such that every element of $Q$ is of the form $f_0(b,c)$ for some tuple $c$ (of fixed length) from $X$. Let $Y_0$ be the set of tuples of this fixed length  from $X$ such that $f_0(b,c)\in Q$. By (*) and (**) $Y_0$ is an $A$-definable set of tuples from $X$. Let $E$ be the equivalence relation on $Y_0$ defined by $E(c_1,c_2)$ iff $f_0(b,c_1)=f_0(b,c_2)$. Then, again by (**), $E$ is $A$-definable. Hence the  set $Y_{0}/E$ which is $A$-definable in ${\bar M}^{eq}$  is contained in $dcl(A,X)$. Let $Y$ denote $Y_{0}/E$.  We can clearly rewrite $f_{0}$ as an $A$-definable map from $Q\times Y$ to $Q$ and now note that for any $b'\in Q$ there is
a \emph{unique} $d\in Y$ such that $b'=f(b,d)$.

For $\s\in \Aut(Q/A,X)$ let $\mu(\s)$ be the unique element of $Y$ such that $\s(b)=f(b,\mu(\s))$. The map $\mu$ sets up a bijection between $\Aut(Q/A,X)$ and $Y$. Using (**) one sees that the group operation on $Y$ induced by $\mu$ is $A$-definable, and the induced action on $Q$ is $B$-definable. We let $G$ denote the set $Y$ equipped with this group structure. So $G$ is an $A$-definable group and its action  on $Q$ is definable over $B$. We will often identify $G$ with $\Aut(Q/A,X)$.

\vspace{2mm}\noindent
(iii) As mentioned above $G$ lives on the $A$-definable set $Y$, so $G(A)$ is as a set the collection of elements of $Y$ which are in $dcl(A) = A$.  The nature of our identification of $G$ with $Aut(Q/X,A)$ yields that $G(A)$ consists of those $\s\in Aut(Q/X,A)$ such that $\s(b)\in B$. Here is the explanation:  $\s\in G(A)$ means $\mu(\s)\in A$ which implies $\s(b) = f(b,\mu(\s))\in B$.  Conversely, 
if $\s(b)\in B$ then  as $\mu(\s)$ is determined by $b$ and $\s(b)$, also $\mu(\s)\in B$. But $\mu(\s)\in Y\subset dcl(A,X)$, hence by Lemma 2.2 and condition (II), $\mu(\s)\in A$, as required. Note finally that we can view $Aut(B/A)$ as precisely the subgroup of $Aut(Q/X,A)$ consisting of those $\sigma$ such that $\sigma(b)\in B$. This completes the proof. 

One should note that as by (II)', $dcl(A,X) \cap B = dcl(A,X)\cap A$, $G(B)$ coincides with $G(A)$. 

\vspace{2mm}\noindent
(iv)  For the Galois correspondence the main point is that condition (II) yields that
$$\text{(***)}\quad \text{for }D_1,D_2\subseteq B, \quad D_2\subseteq\dcl(D_1,A) \text{ iff } D_2\subseteq \dcl(D_1,A,X).$$
Note that the latter is equivalent to ``$D_2$ is fixed by every $\s\in G$ which fixes $D_1$''. Moreover, for $d\in B$
$$\text{(****)} \quad \{\s\in G:\s(d)=d\} \text{ is an $A$-definable subgroup of } \G.$$
This is because if $d=h(b)$ for $h$ an $A$-definable function, then $\s\in G$ fixes $d$ iff $h(\s(b))=h(b)$ iff $h(f(b,\s))=h(b)$, which is, by (**), an $A$-definable condition on $\s$. Now let $C$ be an arbitrary subset of $B$ and $H_C$ the set of $\s\in G$ which fix $C$ pointwise. By (****) and $\omega$-stability, $H_C$ is an $A$-definable subgroup of $G$ and, by (***), the set of elements of $B$ which are fixed by $H_C$ is precisely $\dcl(A,C)$.

On the other hand, suppose $H$ is an arbitrary subgroup of $G$ and let $C$ be the set of elements of $B$ fixed pointwise by every element of $H$. Then, as the elements of $H$ also fix $A$-pointwise, it follows that $C$ is a definably closed subset of $B$ containing $A$. Moreover, by the above, $H_C$ is an $A$-definable subgroup of $G$ containing $H$.

\vspace{2mm}
\noindent (v) Let $q_{C}=tp(b/C)$, and let $Q_{C}$ denote  its set of realizations. As $q_{C}$ extends $q$, (I) holds for $q_{C}$ and $X$. As $B = dcl(A,b)$, $tp(B/A)$ has a unique extension over $A\cup X$, so $tp(bC/A)$ has a unique extension over $A\cup X$, whereby $tp(b/C)$ has a unique extension over $C\cup X$. So $q_{C}$ and $X$ satisfy (II). 

As remarked earlier any $\sigma\in Aut(Q/A\cup X)$ is determined by the value $\sigma(b)$. Hence $\{\sigma\in Aut(Q/A\cup X): tp(\s(b)/C) = tp(b/C)\}$ is a subgroup of $Aut(Q/A\cup X)$ which is naturally isomorphic to $Aut(Q_{C}/A\cup X)$.  Moreover the image of this subgroup under $\mu$ is clearly $H_{C}$. 

As $C$ is a definably closed subset of $dcl(A,b)$, $\omega$-stability implies that $C = dcl(A,e)$ for some finite tuple $e\in C$.  Note that $p = tp(e/A)$ (and $X$) automatically satisfy condition (II). 
Suppose that $tp(e/A)$ and $X$ satisfy (I). Let $\sigma\in Aut(Q_{C}/A\cup X)$ and $\tau\in Aut(Q/A\cup X)$. Let $e'= \tau(e)$. So by (I), $e'\in dcl(A,e,X)$, whereby $\sigma(e') = e'$. Hence
$\sigma^{-1}\tau\sigma (e) = e$, so $\sigma^{-1}\tau\sigma$  is in $Aut(Q_{C}/A\cup X)$.  We have shown that $Aut(Q_{C}/A\cup X)$ is normal in $Aut(Q/A\cup X)$, whence $H_{C}$ is normal in $G$.  Conversely, if $Aut(Q_{C}/A\cup X)$ is normal in $Aut(Q/A\cup X)$, then for any 
 $\rho\in H_C$ and $\s\in G$ we have $\rho(\s(e))=\s(e)$. Therefore, all the elements of $H_C$ fix $P$ pointwise (the latter is the set of realizations of $p$). Thus, $P\subseteq \dcl(e,A,X)$. This shows that $p$ and $X$ satisfy (I).  Finally, the restriction to $P$ yields a group homomorphism from $G$ to $\Aut(P/A,X)$ with kernel $H_C$. 
\end{proof}

We end this section with a couple of remarks.   The first concerns the definability properties of $G(A)$. $G$ is of course a  group definable in the ambient structure ${\bar M}$, with parameters from $A$.  What about $G(A)$ which  corresponds to $Aut(B/A)$?  We maintain the earlier assumptions, notation and conditions (I), (II), for $q$, $X$. 

\begin{remark} \
\begin{enumerate} 
\item  Let $M_{0}$ be the prime model over $A$ (which exists by $\omega$-stability). Suppose that $X(A) = X(M_{0})$, namely all points of $X$ in the model $M_{0}$ are already in $A$. Then $G(A)$ coincides with the interpretation $G(M_{0})$ of the formula over $A$ defining $G$ in ${\bar M}$,  in the elementary substructure $M_{0}$.  So bearing in mind part (iii) of Theorem 2.3, we see in this case that $Aut(B/A)$ {\em is} a definable group in the model $M_{0}$, and we easily obtain from Theorem 2.3 the Galois correspondence between subgroups of $G(M_{0})$ definable in $M_{0}$ and definably closed sets in between $A$ and $B$. 
\item If the theory $T$ is one-sorted (with elimination of imaginaries) {\em and}  has quantifier elimination, then $X$ will be a quantifier-free $A$-definable set in ${\bar M}$, hence as the universe of $G$ is $Y$ which will be an $A$-definable subset of some power of $X$, $G(A)$ will be a group (quantifier-free)  definable (without parameters) in the structure whose universe is $X(A)$ and whose relations are intersections with $X(A)^{n}$ of $A$-definable subsets of $X^{n}$.   See  Example~\ref{example} in Section~\ref{DGT} for the failure of the direct Galois correspondence in this situation. 

\end{enumerate}
\end{remark}

In the next remark we point out an extension of part (iii) of Theorem~\ref{Galoistheory}, whose proof is left to the reader.  
\begin{remark} \label{remarksec3}
Under the same assumptions and notation of Theorem~\ref{Galoistheory}, let $D$ be any definably closed set containing $B$, and let $B_{D}$ be $\dcl(b,A,X(D))$. Then $\mu$ induces an isomorphism between 
$\Aut(B_{D}/A,X(D))$ and $G(D)$.
\end{remark}

\section{Applications to and interpretation in  differential Galois theory}\label{DGT}

We now give the interpretation of the above model-theoretic results in the context of differential fields.  In these examples we will start with the most concrete, ending with the most general, for pedagogical reasons. 

We work in the language of differential rings with $m$ derivations 
$$\mathcal L_{\text{rings}}\cup\{\d_1,\dots,\d_m\}.$$ 
All model-theoretic and differential-algebraic terminology refers to this language. Recall that $\DCF_{0,m}$ is the model companion of the theory of fields of characteristic zero equipped with $m$ commuting derivations, and is called  the theory of differentially closed fields of characteristic zero (with $m$ commuting derivations). It is well known that this theory is $\omega$-stable and eliminates imaginaries, so fits into the assumption $(\dagger)$.  Moreover, it has quantifier elimination \cite{McGrail}. When $m=1$ (the so-called {\em ordinary} case) the theory is sometimes just denoted $\DCF_{0}$. In this ordinary case we write $\d$ for $\d_1$.  

The saturated model $\bar M$  is denoted 
$$(\U,\Pi=\{\d_1,\dots,\d_m\})\models \text{DCF}_{0,m}.$$ 
$K$, $L$, etc. denote   (small)  differential subfields $K$ of $\U$.  For any set $A$ of tuples from $\U$, the definable closure of $A$ in $\U$ coincides with the differential field generated by the coordinates of the members of $A$, and the (model-theoretic) algebraic closure of $A$ coincides with the field-theoretic algebraic closure of the differential field just mentioned.  $K\langle A \rangle$ denotes the differential field generated over $K$ by $A$, where $A$ is a subset of $\U$. 
For any subset  $\DD\subseteq \Pi$ and any differential subfield $F$ of $\U$ we let $F^\DD$ denote the subfield of $\DD$-constants of $F$; that is, 
$$F^\DD=\{a\in F: \d(a)=0 \text{ for all } \d \in \DD\}.$$ 

We let $\mathcal C$ denote $\U^{\Pi}$ the universal field of absolute constants, and also write $C_{F}$ for $F^{\Pi}$ (the absolute constants of $F$).  This is consistent with notation in the introduction.

\begin{lemma} Suppose $K\subseteq L$ are differential fields (i.e. differential subfields of $\U$).  Let $\DD$ be a nonemptyset subset of $\Pi$, and let $X = \U^{\DD}$. Then 
\begin{enumerate}
\item  $dcl(K\cup X)\cap L = K\langle L^{\DD}\rangle$.
\item $dcl(K\cup X)\cap L = K$ means precisely that $K^{\DD} = L^{\DD}$. 
\end{enumerate}
\end{lemma}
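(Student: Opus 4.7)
The plan is to reduce the model-theoretic statement to an elementary field-theoretic identity and then apply a Wronskian-style linear disjointness argument. The first step is to observe that $\dcl(K\cup X)$ coincides with the field compositum $K\cdot X$ inside $\U$: the set $X=\U^\DD$ is closed under every derivation in $\Pi$ (trivially under those in $\DD$, and under $\d'\in \Pi\setminus\DD$ because $\d\d' x=\d'\d x=0$ for every $x\in X$ and $\d\in\DD$), and $K$ is already a differential field, so the compositum $K\cdot X$ is a differential subfield of $\U$. The same observation gives $K\langle L^\DD\rangle=K\cdot L^\DD$.

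The key ingredient is linear disjointness of $L$ and $X$ over $F:=L^\DD$ inside $\U$. This is a standard Wronskian-type minimal-relation argument: suppose $v_1,\dots,v_n\in X$ are $F$-linearly independent but satisfy some nontrivial $L$-linear dependence, and pick one of minimal length $\sum l_i v_i=0$ with $l_1=1$. Applying any $\d\in\DD$ yields $\sum (\d l_i)v_i=0$, a strictly shorter relation, so each $l_i$ is killed by all of $\DD$ and hence lies in $F$, contradicting $F$-linear independence.

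For part~(1), the inclusion $K\langle L^\DD\rangle\subseteq \dcl(K\cup X)\cap L$ is immediate since $L^\DD\subseteq L\cap X$. Conversely, take $a\in K\cdot X\cap L$ and write $a=N/D$ with $N,D\in K[X]$. Since $X$ is a field, every element of $K[X]$ is a finite $K$-linear combination of elements of $X$. Let $V\subseteq X$ be the finite-dimensional $F$-subspace spanned by the elements of $X$ appearing in $N$ and $D$, fix an $F$-basis $v_1,\dots,v_n$ of $V$, and re-expand $N=\sum n_i v_i$, $D=\sum d_i v_i$ with $n_i,d_i\in K\cdot F$. The relation $aD=N$ becomes $\sum(a d_i-n_i)v_i=0$ with coefficients in $L$, so linear disjointness forces $a d_i=n_i$ for all $i$. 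Choosing any $j$ with $d_j\neq 0$ (which exists because $D\neq 0$ and the $v_i$ are $F$-independent) yields $a=n_j/d_j\in K\cdot F=K\langle L^\DD\rangle$.

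Part~(2) is then immediate from (1): $K\langle L^\DD\rangle=K$ iff $L^\DD\subseteq K$, which, as $L^\DD$ consists of $\DD$-constants, is equivalent to $L^\DD\subseteq K^\DD$, and combined with the automatic reverse inclusion this is just $K^\DD=L^\DD$. The only step that is not pure bookkeeping is the linear disjointness lemma, but the minimal-relation trick dispatches it in one line, so no serious obstacle arises.
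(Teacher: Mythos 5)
Your proof is correct. It rests on the same key fact as the paper's proof --- the linear disjointness of the field of $\DD$-constants from any differential field over its own $\DD$-constants --- but you prove that fact from scratch via the minimal-length Wronskian-style relation, whereas the paper simply cites Kolchin (Chap.~II, \S 1) for the packaged consequence that $K\langle E^{\DD}\rangle = E$ for every intermediate differential field $K\subseteq E\subseteq K\langle \U^{\DD}\rangle$, applies it to $E=\dcl(K\cup X)\cap L$, and computes $E^{\DD}=L^{\DD}$ in one line. Your element-wise numerator/denominator computation (expanding $N$ and $D$ over an $F$-basis of a finite-dimensional span inside $X$ and using linear disjointness to equate coefficients) is exactly how one would prove Kolchin's packaged statement anyway, so the two arguments are the same mathematics at different levels of granularity: the paper's version is shorter but outsources the substance to a reference, while yours is self-contained and makes visible where commutativity of the derivations is used (to see that $\U^{\DD}$ and $L^{\DD}$ are $\Pi$-differential subfields, so that the composita are differential fields and coincide with the relevant definable closures). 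Your reduction of part (2) to part (1) matches the paper's.
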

\begin{proof}
(i)  A consequence of the linear disjointness property of the constants (see \cite[Chap. II, \S1]{Kolchin}) is that for any intermediate differential field $K\leq E \leq K\langle \U^\DD\rangle$, we have that
$$K\langle E^\DD\rangle= E.$$
Applying this with $E=dcl(K\cup X)\cap L$ we get
$$ dcl(K\cup X)\cap L   =K\langle\U^\DD\rangle \cap L=K\langle K\langle\U^\DD\rangle \cap L^\DD\rangle=K\langle L^\DD\rangle.$$
\newline
(ii) follows directly from (i).

\end{proof}

\subsection{Picard-Vessiot extensions}
We focus first on the ordinary case, $m=1$. A linear differential equation over $K$ (in vector form)  is something of the form $\d y = Ay$ where $y$ is a $n\times 1$ column vector of indeterminates and $A$ is an $n\times n$ matrix with coefficients from $K$.  A {\em fundamental system of solutions} of the equation is a set $y_{1},.., y_{n}$ of solutions of the equation such that the determinant of the $n\times n$ matrix $(y_{1},..,y_{n})$ is nonzero.  Equivalently $(y_{1},..,y_{n})$ is a basis over $\mathcal C$ of the $\mathcal C$-vector space $V$ of solutions to the equation in $\U$.  By a {\em Picard-Vessiot extension of $K$ for the equation $\d y = Ay$ } is meant a differential field $L$ generated over $K$ by some fundamental system $(y_{1},..,y_{n})$ of solutions, and such that $C_{L} = C_{K}$. So we see from Lemma 3.1 that if $L$ is a Picard-Vessiot extension of $K$ for the equation then $L$ is of the form $dcl(K,b)$,  where $q = tp(b/K)$ and $\mathcal C$ satisfy conditions (I) and (II) from the previous section.  Hence Theorem 2.3 holds in this specific context. In this  case the construction or identification of the Galois group is more direct: for each $\s\in Aut(Q/A,\mathcal C)$, $\s(b) = b\mu(\s)$ for a unique matrix $\mu(\s)\in GL(n,{\mathcal C})$, and $\mu$ gives an isomorphism between $Aut(Q/A, {\mathcal C})$ and an algebraic subgroup $G$ of $GL(n,{\mathcal C})$ defined over $C_{L} = C_{K}$.  The Galois correspondence  is between algebraic subgroups of $G$ defined over $C_{K}$ and differential fields in between $K$ and $L$.   Theorem~\ref{Galoistheory} in this context is contained in \cite[Chap. VI]{Kolchin} in the slightly more general form of strongly normal extensions.
In any case what we have described here subsumes
 \cite[Theorem 4.4]{Dyckerhoff} and \cite{CHS}.

Everything extends to the partial case $m> 1$, where we now consider a set $\d_1 y = A_1y$, ...., $\d_m y = A_my$ of linear DE's over $K$ and where the matrices $A_{i}$ satisfy suitable integrability (or Frobenius) conditions. 

\subsection{Strongly normal extensions}
We work with any $m\geq 1$. According to Kolchin \cite{Kolchin} a strongly normal extension $L$ of a differential field $K$ is a differential field extension $L$ of $K$ such that $L$ is finitely generated over $K$ and for every  automorphism $\sigma$ of $\U$ over $K$, $\sigma(L)\subseteq L\l{\mathcal C}\r$ and  $\sigma$ fixes $C_{L}$ pointwise.  As Kolchin notes the condition that every automorphism of $\U$ over $K$ fixes $C_{L}$ is equivalent to requiring that $C_{L} = C_{K}$.

Let $L$ be  generated over $K$ by the finite tuple $b$, and let $q = tp(b/K)$. Thus  the condition for $L$ to be a strongly normal extension of $K$  means precisely that $q$ and $\mathcal C$ satisfy conditions (I) and (II) from the previous section (using Lemma 3.1).  

Hence Theorem~\ref{Galoistheory} applies. Using elimination of imaginaries, stability, and the fact that $\mathcal C$ is an algebraically closed field without additional structure, the Galois group $G$ as described in the proof of Theorem~\ref{Galoistheory}, is the group of ${\mathcal C}$-points of an algebraic group defined over $C_{K}$. Again the Galois correspondence is between algebraic subgroups of $G$ defined over $C_{K}$ and differential fields in between $K$ and $L$. 
Theorem 2.3 is again contained in Kolchin, \cite[Chap. VI]{Kolchin}. In fact see Theorems 3 and 4 there for the Galois correspondence.    This subsumes \cite{Brouette-Point}.  Moreover part (iii) that $Aut(L/K)$ is $G(A)$, includes,  by virtue of Remark 2.4(2), the statement in \cite[Theorem 3.15]{Brouette-Point} that $Aut(L/K)$ is interpretable in the field $C_{K}$. In fact one actually obtains definability (rather than just interpretability). Similarly, Remark~\ref{remarksec3} includes the statement in \cite[Corollary 3.21]{Brouette-Point} with definability in place of interpretability. 

\subsection{Parameterized Picard-Vessiot ($PPV$) extensions}   
The context is a differential field $K$ equipped with $m+1$  commuting derivations which we write as $\d_{x}, \d_{t_{1}},...\d_{t_{m}}$, and a linear differential equation $\d_{x}y = Ay$ over $K$.   Our universal domain $\U$ is a saturated model of $\DCF_{0, m+1}$, and the solution set $V$ of the equation is an $n$-dimensional vector space over $\U^{\d_{x}}$, the field of $\d_{x}$-constants of $\U$. A fundamental system of solutions is again a matrix $(y_{1},..,y_{n})$ of solutions, with nonzero determinant, and by a $PPV$-extension of $K$ for the equation we mean a differential field $L$ generated over $K$ by such a fundamental system of solutions, such that also $K^{\d_{x}} = L^{\d_{x}}$. 
If $(y_{1},..,y_{n})$ is such and its type over $K$ is $q$ then as before, $q$ and $\U^{\d_{x}}$ satisfy conditions (I) and (II).   The Galois group $G$ from Theorem~\ref{Galoistheory}  is a subgroup of $GL_{n}(\U^{\d_{x}})$ definable over $K^{\d_x}$ in the differentially closed field $(\U^{\d_{x}}, \d_{t_{1}},...,\d_{t_{m}})$.  The Galois correspondence (when $K^{\d_x}$ is not necessarily differentially closed) was established in Section 8.1 of \cite{GGO}, but again is contained in Theorem 2.3.

A strongly normal analogue of $PPV$ extensions was studied in \cite{LN} also establishing the Galois correspondence. 

\subsection{Landesman's strongly normal extensions}
Landesman \cite{Landesman} slightly generalizes Kolchin's definition of strongly normal extension by replacing ${\mathcal C} (= \U^{\Pi})$ by $\U^{\DD}$ for any nonempty subset $\DD$ of $\Pi$.  Formally he takes $\D$ to be $\Pi\setminus\DD$ and defines $L$ to be a $\D$-strongly normal extension of $K$ if $L$ is finitely generated over $K$, and  for any automorphism $\sigma$ of $\U$ over $K$, $\sigma(L)\subseteq L\l\U^{\DD}\r$ and $\sigma$ fixes $L^{\DD}$ pointwise. As in our discussion of strongly normal extensions above, this is equivalent to requiring that $q$, $\U^{\DD}$ satisfy conditions (I) and (II) from Section 2. In \cite[Theorem 2.34]{Landesman} Landesman proves the Galois correspondence when $K^{\DD}$ is differentially closed as a $\D$-field, and asks if the differentially closed assumption can be dropped. Our Theorem 2.3 says yes (for the indirect Galois correspondence).  Note that $PPV$ extensions are a special case of Landesman strongly normal extensions. 

\subsection{Generalised strongly normal extensions}
This is a direct transfer of the notions in Section 2 to the theory $\DCF_{0,m}$.  In the paper \cite{DGT1} the second author introduced $X$-strongly normal extensions of a differential field $K$, in the ordinary context,  where $X$ is an arbitrary $K$-definable set, also mentioning that things extend to the case of several derivations. The condition that $X(K^{diff}) = X(K)$ was included  in the original  definition, partly for consistency with the {\em usual} account of the
Picard-Vessiot (or strongly normal) theory where the field of constants of $K$ is assumed to be algebraically closed, and partly so that $Aut_{\d}(L/K)$ is a definable group in the differentially closed field $K^{diff}$.    But in fact the condition that $X(K^{diff}) = X(K)$,  can be dropped from Definition 2.1 of \cite{DGT1}, still preserving the Galois correspondence (but now indirect). We briefly  explain: 
 
Working now in $\DCF_{0,m}$, given a differential field $K$, and $K$-definable set $X$, we make:

\begin{definition} The differential field $L$ is an $X$-strongly normal extension of $K$, if $L$ is finitely generated over $K$, for each automorphism $\s$ of $\U$ over $K$, $\s (L)\subseteq L\langle X\rangle$, and $dcl(K\cup X)\cap L  =  K$. 
\end{definition} 
 This is precisely Definition 2.1 of \cite{DGT1}, adapted to several derivations, and  minus part (i).   On the other hand, this also states that $q$ and $X$ satisfy conditions (I) and (II) from Section 2., where $q$ is the type over $K$ of some generator of $L$ over $K$. 
 With this definition, Theorem 2.3 gives the Galois group, as well as the (indirect) Galois correspondence. The Galois group $G$ is a differential algebraic group, defined over $K$, and the Galois correspondence is between $K$-definable subgroups of $G$ and differential fields between $K$ and $L$.

\subsection {Logarithmic differential equations on algebraic  $\d$-groups} 
This is the only section of the paper which may require some additional background from the reader. 
In the $m=1$ case, when $K$ is algebraically closed, the  generalized strongly normal extensions of \cite{DGT1} come from ``logarithmic differential equations on algebraic $\d$-groups",  as explained in \cite{DGT4}.  This was suitably generalized by the first author to the $m>1$ case in \cite{LS}. In \cite{BP} Daniel Bertrand and the second author study a special class of such  algebraic $\d$-groups and logarithmic differential equations on them, so-called ``almost semiabelian $\d$-groups",  as part of a  term project around functional Lindemann-Weierstrass theorems for families of semiabelian varieties.  Conjecture 2.3 from \cite{BP} says roughly that the Galois group can be identified from the original equation via ``gauge transformations {\em over $K$}".  The aim here is to give a simple restatement of this property in terms of $X$-strongly nornal extensions of $K$ (from Definition 3.2) , for suitable $X$.

Here we  assume $m = 1$.  We refer the reader to \cite{DGT4} or \cite{BP} but we repeat some definitions. By an algebraic $\d$-group over $K$ we mean an algebraic group $G$ over $K$ together with a lifting of the dervation $\d$ on $K$ to a derivation $s$ on the structure sheaf of $G$. We can and will identify $s$ with a homomorphic rational section over $K$ of the shifted tangent bundle $T_{\d}(G)\to G$. The logarithmic derivative $dlog_{(G,s)}:G \to LG$ (to the Lie algebra of $G$)  is the differential algebraic map taking $x\in G$ to $\d (x)\cdot s(x)^{-1}$ computed in the algebraic group $T_{\d}(G)$.  A logarithmic differential equation on $(G,s)$ over $K$  is an equation of the form $dlog_{(G,s)}(-) = A$, where $A\in LG(K)$.  When $G = GL_{n}$ and $s$ is the $0$-section of $T(G) \to G$, such an equation is simply a linear differential equation in matrix form. 

$(G,s)^{\partial}$ denotes $\{x\in G: s(x) = \d (x)\}$, the ``kernel" of the $dlog_{(G,s)}$-map. It is a finite-dimensional differential algebraic group.  $K_{(G,s)}^{\sharp}$ denotes the differential field generated over $K$ by $G^{\partial}(K^{diff})$, the points of $G^{\partial}$ in the differential closure $K^{diff}$ of $K$.  As any two solutions $y_{1}, y_{2}\in G(K^{diff})$ of a logarithmic differential equation $dlog_{(G,s)}(-) = A$ on $(G,s)$ over $K$, differ by an element of $G^{\partial}(K^{diff})$, it follows that $tr.deg(K_{(G,s)}^{\sharp}(y)/K_{(G,s)}^{\sharp})$ is the same, for any solution of the equation in $G(K^{diff})$.  

We write the lemma below for the case where $G$ is commutative and use additive notation, although suitably written it holds in general. 

\begin{lemma} Let $(G,s)$ be a commutative connected algebraic $\d$-group over $K$ where $K$ is algebraically closed. Let $A\in LG(K)$ and let $y\in G(K^{diff})$ be a solution of the equation
$$\operatorname{(*)} \quad   dlog_{(G,s)}(-) = A.$$
Then the following are equivalent:
\begin{enumerate}
\item [(i)] $tr.deg(K_{(G,s)}^{\sharp}(y)/K_{(G,s)}^{\sharp})$ equals min$\{dim(H): H$ is a connected algebraic $\d$-subgroup of $G$ defined over $K$ and $A\in L(H) + dlog_{(G,s)}(G(K))\}$.
\item [(ii)] For some solution $y_0$ of $\operatorname{(*)}$, $K(y_0)$ is a $G^{\partial}$-strongly normal extension of $K$. 
\end{enumerate}
\end{lemma}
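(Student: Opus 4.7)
The plan is to compare the Galois situation over $K$ with that over $K^{\sharp}_{(G,s)}$ (hereafter $K^{\sharp}$), where the theory is transparent. First I would establish the preliminary fact that $K^{\sharp}\langle y\rangle/K^{\sharp}$ is itself a $G^{\partial}$-strongly normal extension in the sense given above: condition (I) holds because any two solutions of $(*)$ differ by an element of $G^{\partial}$, and condition (II) holds because $K^{\sharp}$ already contains $G^{\partial}(K^{diff})$. By Theorem~\ref{Galoistheory}, the corresponding Galois group $\Gamma^{\sharp}\leq G^{\partial}$ is $K^{\sharp}$-definable of dimension $d:=tr.deg(K^{\sharp}\langle y\rangle/K^{\sharp})$. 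The standard Picard--Vessiot dictionary for logarithmic equations on algebraic $\d$-groups from~\cite{DGT4} identifies $d$ with the analogue of the minimum in (i), but where $H$ ranges over connected algebraic $\d$-subgroups of $G$ defined over $K^{\sharp}$ and $g\in G(K^{\sharp})$. Since this $K^{\sharp}$-version is always bounded above by its $K$-version, (i) is equivalent to the statement that the minimum is already attained by a pair $(H,g)$ defined over $K$.

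For (i)$\Rightarrow$(ii), pick such an $H$ over $K$ of dimension $d$ and $g\in G(K)$ with $A-dlog_{(G,s)}(g)\in L(H)$, and set $y_{0}:=y-g$. Then $dlog_{(G,s)}(y_{0})\in L(H)$, and after a further translation by an element of $G^{\partial}(K^{diff})\subseteq K^{\sharp}$ we may assume $y_{0}\in H(K^{diff})$. In particular $tr.deg(K\langle y_{0}\rangle/K)\leq \dim H=d$. Moreover $K^{\sharp}\langle y_{0}\rangle=K^{\sharp}\langle y\rangle$, so $tr.deg(K^{\sharp}\langle y_{0}\rangle/K^{\sharp})=d$; combined with the general inequality $tr.deg(K^{\sharp}\langle y_{0}\rangle/K^{\sharp})\leq tr.deg(K\langle y_{0}\rangle/K)$ this forces equality throughout. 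In the $\omega$-stable setting this is precisely forking-independence of $y_{0}$ from $K^{\sharp}$ over $K$, which yields $G^{\partial}(K\langle y_{0}\rangle)=G^{\partial}(K)$ and hence condition (II). Condition (I) is automatic, so $K\langle y_{0}\rangle/K$ is $G^{\partial}$-strongly normal.

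For (ii)$\Rightarrow$(i), Theorem~\ref{Galoistheory} yields a $K$-definable Galois group $\Gamma\leq G^{\partial}$ with $\dim\Gamma=tr.deg(K\langle y_{0}\rangle/K)$. Since $y-y_{0}\in G^{\partial}(K^{diff})\subseteq K^{\sharp}$, one has $K^{\sharp}\langle y_{0}\rangle=K^{\sharp}\langle y\rangle$; the $G^{\partial}$-strong normality of $K\langle y_{0}\rangle/K$ gives the independence of $y_{0}$ from $K^{\sharp}$ over $K$, so $\dim\Gamma=tr.deg(K^{\sharp}\langle y_{0}\rangle/K^{\sharp})=d$. Finally, I invoke the algebraic-hull correspondence between $\d$-subgroups of $G^{\partial}$ and algebraic $\d$-subgroups of $G$: the $K$-definable $\Gamma$ sits inside $H^{\partial}$ for a unique minimal connected algebraic $\d$-subgroup $H$ of $G$ defined over $K$, and the dictionary from~\cite{DGT4} produces $g\in G(K)$ with $A-dlog_{(G,s)}(g)\in L(H)$ and $\dim H=\dim \Gamma=d$; this witnesses (i).

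The main obstacle is the descent step appearing in (ii)$\Rightarrow$(i): given only that $\Gamma$ is $K$-definable as a subgroup of $G^{\partial}$, one must produce both an algebraic $\d$-hull $H$ of $\Gamma$ and a gauge element $g$ both defined over $K$. Over $K^{\sharp}$ this is the classical picture from~\cite{DGT4}, but here $K^{\sharp}$ may be genuinely larger than $K$, so one must verify that $K$-definability of $\Gamma$ propagates down to $H$ and that a gauge transformation can be found already in $G(K)$ rather than merely in $G(K^{\sharp})$. The identification of condition (II) with the equal-transcendence-degree statement used in both directions is a minor but analogous point of care.
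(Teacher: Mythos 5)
Your overall architecture matches the paper's: for (i)$\Rightarrow$(ii) you translate $y$ into a $K$-coset of $H$, compare transcendence degrees over $K$ and over $K^{\sharp}_{(G,s)}$, and convert the resulting equality into forking-independence of the new solution from $G^{\partial}$ over $K$; for (ii)$\Rightarrow$(i) you pass to the algebraic hull $H$ of the Galois group. Two small slips in the first direction: your $y_{0}=y-g$ with $g\in G(K)$ solves the gauge-transformed equation rather than $(*)$ itself (one should instead translate $y$ by an element of $G^{\partial}(K^{diff})$ into the $K$-coset $H+g$, which keeps it a solution of $(*)$ and generates the same field over $K$); and the chain ``independence $\Rightarrow G^{\partial}(K\langle y_{0}\rangle)=G^{\partial}(K)\Rightarrow$ (II)'' is not valid for a general definable set in place of the constants --- the correct route is that independence from $G^{\partial}$ over the algebraically closed field $K$ gives weak orthogonality, i.e.\ condition (II), directly.

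The genuine gap is precisely the step you flag at the end and then do not resolve: in (ii)$\Rightarrow$(i) you must produce $H$ \emph{and} the gauge element $g$ over $K$, not merely over $K^{\sharp}_{(G,s)}$, and the ``dictionary from \cite{DGT4}'' cannot deliver this, since that theory is developed under the hypothesis that the base field already contains $G^{\partial}(K^{diff})$. The paper closes this as follows. First, by the remark following Fact 2.2 of \cite{BP}, the Galois group is already of the form $H^{\partial}$ for a connected algebraic $\d$-subgroup $H$ of $G$ defined over $K$, so no separate hull step is needed. Next, the coset $y+H^{\partial}$ is definable both over $dcl(K\cup G^{\partial})$ and over $K\langle y\rangle$, and condition (II') forces these to intersect in $K$; hence $y+H^{\partial}$, and therefore its Zariski closure $y+H$, is defined over $K$. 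Taking an $\operatorname{ACF}$-code $e$ of $y+H$ gives $e\in K$, so $y+H$ is a coset of $H$ defined over the \emph{algebraically closed} field $K$ and thus has a $K$-rational point $y_{1}$; this $y_{1}$ is the gauge element, yielding $A=dlog_{(G,s)}(y_{1})+dlog_{(G,s)}(y-y_{1})\in dlog_{(G,s)}(G(K))+L(H)$. Without this descent-of-the-coset argument, which uses both condition (II) and the hypothesis that $K$ is algebraically closed in an essential way, the implication (ii)$\Rightarrow$(i) is not established.
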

\begin{proof} The condition in (i) that $A\in L(H) + dlog_{(G,s)}(G(K))$ is obviously equivalent to  $y\in H + G(K) + G^{\partial}(K^{diff})$ and will be used that way. 
\smallskip

\noindent (i)$\,\Rightarrow\,$(ii):  Fix a solution $y$ of (*). Suppose that $H$ is a connected algebraic $\d$-subgroup of $G$ such that  $y\in H + G(K) + G^{\partial}(K^{diff})$ and 
$$dim(H) = tr.deg.(K_{(G,s)}^{\sharp}(y)/K_{(G,s)}^{\sharp}).$$
So there is $y_{0}\in H+G(K)$  of the form $y+ g$ for some $g\in G^{\partial}(K^{diff})$. Note that $y_{0}$ is also a solution of (*) and moreover $tr.deg(K(y_{0})/K) \leq dim(H)$.  On the other hand, $tr.deg.(K_{(G,s)}^{\sharp}(y_{0})/K_{(G,s)}^{\sharp})$ = $ tr.deg.(K_{(G,s)}^{\sharp}(y)/K_{(G,s)}^{\sharp})$ = $dim(H)$, by assumption.  The conclusion is that
$tr.deg(K(y_{0})/K) =  dim(H)$ and $y_{0}$ is independent from $K_{G}^{\sharp}$ over $K$ (in $\DCF_{0}$).   As $K_{G}^{\sharp} = K(G^{\partial}(K^{diff}))$, the latter says that $y_{0}$ is independent from $G^{\partial}(K^{diff})$ over $K$, and this says that $y_{0}$ is independent from $G^{\partial}$ over $K$. As $K$ is algebraically closed and $G^{\partial}$ is definable, this says that $tp(y_{0}/K)$ has a unique extension to a complete type over $K\cup G^{\partial}$ which is precisely condition (II).  As $y_{0}$ is a solution of (*) we have condition (I) too.

Conversely, suppose (ii) holds, witnessed by $y$.  By (II), $y$ is independent of $G^{\partial}$ over $K$. By \cite{BP}, Remark following Fact 2.2, let $H$ be a connected algebraic $\d$-subgroup 
of $G$, over $K$ such that $H^{\partial} = Aut(Y/K,G^{\partial})$. Consider the orbit $y + H^{\partial}$. It is defined over $K^{\sharp}$, and over $K(y)$ (in $\DCF_{0}$),  so also over $K$. But then $y+H$, the Zariski closure of $y+H^{\partial}$  is defined over $K$ (in $\DCF_{0}$).  By elimination of imaginaries in $\operatorname{ACF}$, let $e$ be a code of $y+H$ (in $\operatorname{ACF}$). But then $e\in K$. So $y+H$ is defined over $K$ in $\operatorname{ACF}$ so has a $K$-rational point $y_{1}\in G(K)$. So
 $y\in H + G(K)$. The minimality of $dim(H)$ is clear.
\end{proof}

Conjecture 2.3 from \cite{BP} can now be restated as:  Let $G$ be an almost semiabelian $\d$-group over $K = {\mathbb C}(t)^{alg}$, let $A\in LG(K)$, and consider the equation $dlog_{(G,s)}(-) = A$. Then the equation has a solution $y$ such that $K(y)$ is a $G^{\partial}$-strongly normal extension of $K$.

\medskip

Finally let us mention an example, even a Picard-Vessiot extension,  where the Galois correspondence is not direct.

\begin{example}\label{example}
We work in the ordinary case. Let $K=\mathbb Q$ and $L=\mathbb Q(e^t)$, with derivation $d/dt$.  Then $L$ is a Picard-Vessiot extension of $K$ for the equation $\d y = y$.  It can be seen that in this case $G=\mathbb G_m({\mathcal C})$. The intermediate differential fields are $\mathbb Q(e^{nt})$ as $n$ varies in $\mathbb N$. However, the only algebraic subgroups of $G(K)=\mathbb G_m(\mathbb Q)$ are the two obvious ones and $\{-1,1\}$.
\end{example}

\bibliographystyle{plain}

\end{document}